\newcommand{\RR}{\mathbb{R}}
\newcommand{\eps}{\varepsilon}
\newtheorem{thm}{Theorem}[section]
\newtheorem{lem}[thm]{Lemma}
\newtheorem{clm}[thm]{Claim}
\newtheorem{prop}[thm]{Proposition}
\theoremstyle{definition}
\theoremstyle{remark}
\theoremstyle{definition}
\newtheorem{exmp}[thm]{Example}
\title{Improved bound for the $k$-variate Elekes--R\'onyai theorem}
\author{Yaara Jahn and Orit E. Raz}
\author{
Yaara Jahn\thanks{Hebrew University of Jerusalem, Jerusalem, Israel. Email: yaara.jahn@mail.huji.ac.il}\and
Orit E. Raz\thanks{Ben-Gurion University of the Negev, Beer Sheva, Israel. Email: oritraz@bgu.ac.il}}
\begin{document}

\maketitle

\begin{abstract}
Let $f\in \RR[x_1,\ldots, x_k]$, for $k\ge 2$. For any finite sets $A_1,\ldots, A_k\subset \RR$, consider the set 
$$
f(A_1,\ldots, A_k):=\{f(a_1,\ldots, a_k)\mid (a_1,\cdots,a_k)\in A_1\times\cdots \times A_k\},
$$
that is, the image of $A_1\times \cdots\times A_k$ under $f$.
Extending a theorem of Elekes and R\'onyai, which deals with the case $k=2$, and the result of Raz, Sharir, and De Zeeuw~\cite{RazShaDeZee4D}, dealing with the case $k=3$, it is proved in Raz and Shem Tov~\cite{RazShe}, that for every choice of finite $A_1,\ldots, A_k\subset \RR$, each of size $n$, one has 
\begin{equation}\label{RSbound}
|f(A_1,\ldots,A_k)|=\Omega(n^{3/2}),
\end{equation}
unless $f$ has some degenerate special form.

In this paper, we introduce the notion of a \emph{rank} of a $k$-variate polynomial $f$, denoted as ${\rm rank}(f)$. 
Letting $r={\rm rank}(f)$, we prove that
\begin{equation}
|f(A_1,\ldots,A_k)|=\Omega\left(n^{\frac{5r-4}{2r}-\eps}\right),
\end{equation}
for every $\eps>0$, where the constant of proportionality depends on $\eps$ and on $\deg(f)$.
This improves the lower bound \eqref{RSbound}, for polynomials $f$ for which ${\rm rank}(f)\ge 3$.

We present an application of our main result, to lower bound the number of distinct $d$-volumes spanned by $(d+1)$-tuples of points lying on the moment curve in $\RR^d$.
\end{abstract}

\vspace{1cm}
\section{Introduction}

In many cases in combinatorial geometry, counting questions involving distances, slopes, collinearity, etc., can be reformulated as analogous counting questions involving grid points lying on certain algebraic varieties. A unified study of such problems began with a question of Elekes~\cite{Ele} about expansion of bivariate real polynomials $f(x,y)$. 
Specifically, he asked: For a bivariate polynomial $f\in \RR[x,y]$ and given finite sets $A,B\subset \RR$, how small can be the image set
$$
f(A,B)=\{f(a,b)\mid a\in A, b\in B\}.
$$

Elekes conjectured that the image of $f$ on a $n\times n$ Cartesian product must be of cardinality superlinear in $n$, unless $f$ has a very concrete {\it special form}. This was confirmed in 2000 by Elekes and R\'onyai~\cite{EleRon00} who proved the following dichotomy:
Either $f$ has one of the forms
\begin{align}
f(x,y)&=h(p(x)+q(y))\quad\text{or}\nonumber\\
f(x,y)&=h(p(x)q(y)),\label{specialbivariate}
\end{align} for some univariate real polynomials $p,q,h$, or, otherwise, for every finite $A,B\subset \RR$, each of size $n$, we have
\begin{equation}\label{eq:ER}
|f(A,B)|=\omega(n).
\end{equation}

In case $f$ is not one of the forms in \eqref{specialbivariate}, the lower bound on $|f(A,B)|$ was improved in \cite{RazShaDeZee} to be  $\Omega(n^{4/3})$, and further improved in \cite{SolZha} to be $\Omega(n^{3/2})$, which is currently the best known lower bound for bivariate polynomials that are not special.

An analogue of the Elekes--R\'onyai problem can be formulated for polynomials in more than two variables. The trivariate case was studied by Raz, Sharir, and De Zeeuw~\cite{RazShaDeZee4D}, and the general $k$-variate case was established by Raz and Shem Tov~\cite{RazShe}. They obtain the following result. 
\begin{thm}[{\bf \cite{RazShaDeZee4D, RazShe}}]\label{thm:kvarER}
    Let $k\ge 3$ and let $f\in \RR[x_1,\ldots, x_k]$. Then one of the following holds:\\
    (i) For every $A_1,\ldots, A_k\in \RR$ each of size $n$ one has
    $$
    f(A_1,\ldots,A_k)|=\Omega\left(n^{3/2}\right)$$\\
    (ii) $f$ is of one of the forms:
    \begin{align}
        f(x_1,\ldots,x_k)&=h(p_1(x_1)+\cdots +p_k(x_k))\label{kspecial}\\
        f(x_1,\ldots,x_k)&=h(p_1(x_1)\cdot \ldots \cdot p_k(x_k))\nonumber
    \end{align}
\end{thm}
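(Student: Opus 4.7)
The plan is to proceed by induction on $k$, with the base case $k=3$ supplied by the Raz--Sharir--De Zeeuw result \cite{RazShaDeZee4D} (alternatively, one could start from $k=2$ via Solymosi--Zhang \cite{SolZha}, which already yields $\Omega(n^{3/2})$). Assume the theorem holds for $k-1$ variables; we establish it for $k$.

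For the inductive step, fix $c\in \RR$ and consider the $(k-1)$-variate specialization $f_c(x_1,\ldots,x_{k-1}):=f(x_1,\ldots,x_{k-1},c)$. The key observation is the trivial inclusion
$$f_c(A_1,\ldots,A_{k-1})\subseteq f(A_1,\ldots,A_k) \qquad\text{for every } c\in A_k.$$
Consequently, if we can exhibit even a single $c\in A_k$ for which $f_c$ is a genuine $(k-1)$-variate polynomial not of the special form in \eqref{kspecial}, the inductive hypothesis immediately yields $|f(A_1,\ldots,A_k)|\ge |f_c(A_1,\ldots,A_{k-1})|=\Omega(n^{3/2})$.

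The core of the proof is therefore a rigidity/specialization lemma of the following shape: if $f\in \RR[x_1,\ldots,x_k]$ depends nontrivially on $x_k$ and $f_c$ admits the representation $f_c=h_c(p_{1,c}(x_1)+\cdots+p_{k-1,c}(x_{k-1}))$ for all $c$ outside a proper algebraic subset of $\RR$, then $f$ itself has the corresponding $k$-variate special form. To prove this I would treat $c$ as a transcendental parameter, regard the factorization of $f_c$ as varying algebraically in $c$, and differentiate the identity with respect to $c$ to pin down the $c$-dependence of the polynomials $p_{i,c}$ and $h_c$. Combined with the essential uniqueness of such a decomposition (up to shifting additive constants between the $p_i$ and $h$, and analogously for the product form), this forces the full $c$-dependence to be absorbed into a single extra summand $p_k(c)$, giving $f=h(p_1(x_1)+\cdots+p_k(x_k))$; the product case is handled analogously.

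The main obstacle is the rigidity lemma, and especially the treatment of degenerations. The decomposition of $f_c$ is only unique up to an additive/multiplicative ambiguity among the $p_{i,c}$ and $h_c$, so a priori the $c$-dependence could be hidden in these normalizations rather than in a bona fide new summand; moreover, the sum and product special forms become intertwined via logarithms and can in principle mix when some $p_{i,c}$ specializes to a constant at particular values of $c$. One must show that no such "hybrid" construction produces a non-special $f$ all of whose fibers are special, i.e.\ that the exceptional locus of $c$ is either all of $\RR$ (forcing $f$ to be special) or a proper algebraic (hence finite) subset, which some $c\in A_k$ avoids once $n$ exceeds a constant depending only on $\deg f$.
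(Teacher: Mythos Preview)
The paper does not itself prove Theorem~\ref{thm:kvarER}; it is quoted as a known result from \cite{RazShaDeZee4D,RazShe}. The only information the paper offers about its proof is the informal description in the introduction: ``the argument in \cite{RazShe} reduces the $k$-variate case for $k\ge4$ to the trivariate case by fixing values for $k-3$ of the variables. They then show that if fixing any such subset of $k-3$ variables yields a special trivariate polynomial, then $f$ itself, as a $k$-variate polynomial, must be special.''

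Your strategy is exactly this one (phrased as an induction that peels off one variable at a time rather than $k-3$ at once, which is immaterial). You have also correctly isolated the substantive content: the rigidity statement that if every one-parameter specialization $f_c$ is of the special additive or multiplicative form, then $f$ itself is. This is indeed the heart of \cite{RazShe}, and it is genuinely nontrivial; your sketch (treat $c$ generically, differentiate the decomposition in $c$, exploit essential uniqueness) points in the right direction but does not yet constitute a proof. In particular, the issues you flag---the normalization ambiguity in the decomposition and the possibility that additive and multiplicative forms interleave along the family---are real, and \cite{RazShe} handles them via a careful argument based on the differential identity recorded here as Lemma~\ref{RazShem} (their Lemma~2.3), rather than by the ``differentiate in $c$'' heuristic you suggest. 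So your plan is correct in outline and matches the cited approach, but the rigidity lemma remains a black box in your write-up; to turn this into a proof you would either need to reproduce the argument of \cite{RazShe} or cite it.
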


Note that the bound in Theorem~\ref{thm:kvarER}, for non-special polynomials $f$, is independent of $k$, and in particular coincides with the bound for $k=3$.
At first glance this may appear to be merely a consequence of the proof. Indeed, the argument in \cite{RazShe} reduces the $k$-variate case for $k \ge 4$ to the trivariate case by fixing values for $k-3$ of the variables. They then show that if fixing any such subset of $k-3$ variables yields a special trivariate polynomial, then $f$ itself, as a $k$-variate polynomial, must be special in the sense of \eqref{kspecial}.

It is natural to expect that increasing the number of variables should force the image of $f$ to grow faster. However, certain polynomials in many variables can in effect behave like polynomials in fewer variables. For example, consider the $(k+2)$-variate polynomial
$$
f(x,y,z_1\ldots,z_k)=xy+z_1+z_2+\cdots +z_k.$$
Let $A,B,C_1,\ldots,C_k\subset \RR$, where $A,B$ are arbitrary finite sets of size $n$ and $C_1=\cdots=C_k=[n]$. Let 
$$
C:=C_1+\cdots +C_k=\{k,k+1,\ldots,kn\}.
$$ 
Then $|C|=\Theta(n)$, and letting $g(x,y,z):=xy+z$, we have
$$
f(A,B,C_1,\ldots,C_k)=g(A,B,C).$$
In this case, with the current techniques, it is unclear how to obtain a bound on the expansion of $f$ that improves upon the trivariate result for $g$.

\paragraph{Our results.}
In this paper, we recognize $k$-variate polynomials that are, in a precise sense, truly $k$-variate, and we improve the corresponding expansion bounds for them. More precisely, for a $k$-variate polynomial $f$, we introduce the notion of the {\it rank} of $f$. If $f$ has rank $r$, then, in a rigorous sense, it is essentially $(r+1)$-variate, and the bound on the size of its image can be improved with an exponent that grows with $r$.

We now define the rank of a polynomial and then state our main result.

Let $f\in \RR\left[x_1,\ldots,x_k\right]$ and let $d_{x_1}$ stand for the degree of $f$ with respect to the variable $x_1$. 
Write $$
f\left(x_1,\ldots,x_k\right)=\sum\limits_{i=0}^{d_{x_1}}\alpha_{i}\left(x_2,\ldots,x_k\right)x_1^i.
$$
We consider the {\it coefficient map} $T=T_{f,x_1}:\RR^{k-1}\to \RR^{d_{x_1}+1}$ given by $$
\left(x_2,\ldots,x_k\right)\mapsto \left(\alpha_0\left(x_2,\ldots,x_k\right),\ldots,\alpha_{d_{x_1}}\left(x_2,\ldots,x_k\right)\right).
$$
We define the rank of $f$ with respect to the variable $x_1$ to be 
$$
{\rm rank}_{x_1}(f):={\rm rank}(J_T),$$
where $J_T$ stands for the Jacobian matrix of $T$.
Note that 
$$
0\le {\rm rank}_{x_1}(f)\le k-1.
$$ 
Similarly, define ${\rm rank}_{x_i}(f)$, for every $i=2,\ldots, k$, where $x_i$ plays the role of $x_1$.

Finally, define the {\it rank} of the polynomial $f$ to be 
$$
{\rm rank}(f):=\max_{1\le i\le k}{\rm rank}_{x_i}(f).
$$

\begin{exmp} \label{example}
Let $$
f\left(x_1,x_2,\ldots,x_k\right) =  x_1x_k + x_2x_k^2\dots + x_{k-1}x_k^{k-1}.$$
Then $\text{rank}(f)=\text{rank}_{x_k}(f)=k-1$.
\end{exmp} 

\begin{exmp}
Let $$
f(x_1,x_2,\ldots,x_k) =  p_1(x_1)x_k + p_2(x_1,x_2)x_k^2+\dots + p_{k-1}(x_1,\ldots,x_{k-1})x_k^{k-1},$$
where $p_i$ is an $i$-variate polynomial that depends non-trivially on $x_i$.
Then $\text{rank}(f)=\text{rank}_{x_k}(f)=k-1$. Indeed, in this case the matrix $J_{T_{f,x_k}}$ is upper-triangular.
\end{exmp}

We prove the following main result of the paper.
\begin{thm}\label{mainthm}
Let $k\ge 3$ and let $f\in\RR[x_1\ldots,x_k]$. 
Assume that ${\rm rank}(f)=r\ge 2$.
Then, for every $\varepsilon>0$, the following holds: 
Let $A_1,\ldots,A_k\subseteq \RR$ be finite, each of size $n$. Then $$
\lvert f(A_1,\ldots,A_k)\rvert= \Omega\left( n^{\frac{5r-4}{2r}-\varepsilon}\right), 
$$
where the constant of proportionality depends on $\deg(f)$, on $r$, and on $\eps$.
\end{thm}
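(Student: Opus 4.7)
The plan is to reduce the theorem to the case of an $(r+1)$-variate polynomial of full rank $r$, and then establish the bound for this reduced case by a Cauchy--Schwarz energy argument combined with an inductive incidence estimate. After relabeling, we may assume $\text{rank}_{x_k}(f) = r$ and that the first $r$ columns of the Jacobian $J_T$ of the coefficient map $T : \RR^{k-1} \to \RR^{d_{x_k}+1}$ are linearly independent at a generic point of $\RR^{k-1}$. Let $M$ be a nonzero $r \times r$ minor of $J_T$ involving these columns. The set of ``bad'' values $(c_{r+1},\ldots,c_{k-1}) \in \RR^{k-1-r}$ at which $M(x_1,\ldots,x_r,c_{r+1},\ldots,c_{k-1})$ vanishes identically as a polynomial in $x_1,\ldots,x_r$ is a proper subvariety of $\RR^{k-1-r}$, so by a Schwartz--Zippel-type bound it meets $A_{r+1} \times \cdots \times A_{k-1}$ in at most $O(n^{k-2-r})$ points. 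Hence, for $n$ sufficiently large, we may fix good values $c_j \in A_j$ so that the polynomial
\[
g(x_1,\ldots,x_r,x_k) := f(x_1,\ldots,x_r,c_{r+1},\ldots,c_{k-1},x_k)
\]
satisfies $\text{rank}_{x_k}(g) = r$. Since $|f(A_1,\ldots,A_k)| \geq |g(A_1,\ldots,A_r,A_k)|$, it suffices to prove the bound in the case $k = r+1$ with $g$ of full rank $r$; in this regime the coefficient map of $g$ is generically a local diffeomorphism onto its $r$-dimensional image.

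Next, write $B := A_1 \times \cdots \times A_r$ and $C := A_k$, and define the energy
\[
E := \left|\{(b,c,b',c') \in (B \times C)^2 : g(b,c) = g(b',c')\}\right|.
\]
By Cauchy--Schwarz, $|g(B,C)| \cdot E \geq n^{2r+2}$, so the theorem reduces to proving the upper bound $E = O(n^{2r+2 - (5r-4)/(2r) + \eps})$. I would prove this by induction on $r$. The base case $r = 2$ reduces (after a further pigeonholing step that fixes one of the two $y$-variables generically) to the Solymosi--Zhang bound for non-special bivariate polynomials, which corresponds via Cauchy--Schwarz to the desired energy estimate. For the inductive step, decompose $E = \sum_{b_r, b'_r \in A_r} E_{b_r, b'_r}$ according to the last coordinate. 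For generic $b_r$, the slice $g_{b_r}(y_1,\ldots,y_{r-1},z) := g(y_1,\ldots,y_{r-1},b_r,z)$ is an $r$-variate polynomial of rank $r-1$ (by the same $r\times r$-minor argument applied to the restricted Jacobian), and the inductive hypothesis applies to control the diagonal contributions $E_{b_r, b_r}$.

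The main obstacle is handling the off-diagonal terms $E_{b_r,b'_r}$ with $b_r \ne b'_r$. Each such term counts grid points from $(A_1\times\cdots\times A_{r-1}) \times C \times (A_1\times\cdots\times A_{r-1}) \times C$ on the coincidence hypersurface $g_{b_r}(y,z) - g_{b'_r}(y',z') = 0$ in $\RR^{2r}$. The full-rank hypothesis on $g$ ensures that distinct slices $g_{b_r}, g_{b'_r}$ are generically non-equivalent, so this hypersurface is non-degenerate in the sense required by an Elekes--Szab\'o-type bound. The technical heart of the proof will be to extract from this generic non-degeneracy a quantitative incidence estimate providing precisely the gain $n^{2/(r(r-1))}$ per inductive step that telescopes to the exponent $(5r-4)/(2r) = 5/2 - 2/r$; I expect this to require either a higher-dimensional analogue of the Solymosi--Zhang partitioning scheme, or a polynomial-partitioning argument applied to the two-parameter family of coincidence hypersurfaces indexed by $(b_r,b'_r)$. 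The small factor $n^{\eps}$ in the final bound would arise, as usual in such arguments, from the depth of the induction or the number of cells in a partitioning step.
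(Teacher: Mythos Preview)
Your reduction step---fixing $k-r-1$ of the variables generically so as to pass to an $(r+1)$-variate polynomial of full rank $r$---is correct and is exactly what the paper does (Lemma~4.1). The divergence is entirely in how you treat the full-rank case.

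The paper handles the full-rank case in one shot, with no induction and no energy. It observes that the set
\[
S=\{(a_1,\ldots,a_{r+1},b)\in A_1\times\cdots\times A_{r+1}\times B : b=g(a_1,\ldots,a_{r+1})\}
\]
has size exactly $n^{r+1}$, and that each element of $S$ gives an incidence between a point $(a_1,b)\in A_1\times B$ and the plane curve $\gamma_{a_2,\ldots,a_{r+1}}=\{y=g(x,a_2,\ldots,a_{r+1})\}$. The full-rank hypothesis is used only to show that the assignment $(a_2,\ldots,a_{r+1})\mapsto\gamma_{a_2,\ldots,a_{r+1}}$ has $O(1)$ fibers off a small exceptional set, so that the curves genuinely form an $r$-dimensional family. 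Then the Sharir--Zahl point--curve incidence bound (Theorem~2.1 in the paper) with $s=r$ gives $|S|=O\bigl(|B|^{2r/(5r-4)}n^{r(1+\eps)}\bigr)$, and comparing with $|S|=n^{r+1}$ yields the result. This is where the exponent $\tfrac{5r-4}{2r}$ and the $n^\eps$ loss actually come from.

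Your proposed route via the energy $E$ and induction on $r$ has a genuine gap. First, the base case is circular: Cauchy--Schwarz gives $|g(B,C)|\cdot E\ge n^{2r+2}$, so an \emph{upper} bound on $E$ yields a \emph{lower} bound on the image, not the other way around; you cannot recover the energy estimate $E=O(n^{9/2+\eps})$ for $r=2$ from the Solymosi--Zahl image bound $|g|=\Omega(n^{3/2})$. Second, and more seriously, the inductive step is not a proof but a wish: you yourself identify the off-diagonal terms $E_{b_r,b_r'}$ as ``the main obstacle'' and say only that you ``expect'' a higher-dimensional Elekes--Szab\'o or partitioning argument to supply the precise gain $n^{2/(r(r-1))}$. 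No such argument is given, and the existing higher-dimensional Elekes--Szab\'o bounds do not deliver exponents of this strength. The paper avoids this entirely by staying in the plane and invoking Sharir--Zahl once.
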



 We observe  that ${\rm rank}(f)=1$ corresponds to the special forms from Theorem~\ref{thm:kvarER}. Indeed, we have the following theorem. 
 \begin{thm}\label{rank1char}
Let $k\ge3$ and let $f\in\RR\left[x_1 \ldots,x_k\right]$. Assume that $f$ depends non-trivially on each of its variables and that  $$
{\rm rank}(f) = 1.$$
Then $f$ has one of the forms
\begin{align}
f\left(x_{1},\ldots,x_{k}\right) & =h\left(p_{1}\left(x_{1}\right)+\cdots+p_{k}\left(x_{k}\right)\right)\ \ \text{ or}\nonumber\\
f\left(x_{1},\ldots,x_{k}\right) & =h\left(p_{1}\left(x_{1}\right)\cdot\ldots\cdot p_{k}\left(x_{k}\right)\right),\label{special1}
\end{align}
for some univariate real polynomials $h(x), p_1(x),\ldots,p_k(x)$.
\end{thm}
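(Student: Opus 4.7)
My plan is to translate the rank-$1$ hypothesis into a multiplicative differential identity for $f$, integrate the resulting closed $1$-form to obtain an additive structure in transformed coordinates, and then classify the polynomial solutions. First I would show that $\mathrm{rank}_{x_i}(f)\le 1$ is equivalent to the existence of polynomials $F_i,u_i$, with $u_i$ depending non-trivially on each of the remaining variables, such that
\[
f(x_1,\ldots,x_k)=F_i\bigl(x_i,\,u_i(x_1,\ldots,\widehat{x_i},\ldots,x_k)\bigr).
\]
This follows from a standard functional-dependence / L\"uroth-type argument: rank at most $1$ of the Jacobian of the coefficient map $T_{f,x_i}$ forces its image to lie on a rational affine curve, which admits a polynomial parametrization, yielding the single parameter $u_i$.

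Next, from $f=F_i(x_i,u_i)$ I would compute, for any distinct $i,j,l$, that $f_{x_j}/f_{x_l}=(u_i)_{x_j}/(u_i)_{x_l}$ is a rational function independent of $x_i$. Ranging over $i\notin\{j,l\}$ (nonempty because $k\ge 3$), the ratio $A_{jl}:=f_{x_j}/f_{x_l}$ depends only on the pair $(x_j,x_l)$. The trivial cocycle identity $A_{jl}\,A_{lm}=A_{jm}$ then separates variables: fixing $x_l$ displays $A_{jm}$ as $\widetilde A_j(x_j)\widetilde B_m(x_m)$, and unwinding the cocycle produces univariate rational functions $a_1,\ldots,a_k$ with $A_{jl}(x_j,x_l)=a_j(x_j)/a_l(x_l)$.

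This identity says that $f_{x_j}/a_j(x_j)$ is a single function $\phi$, independent of $j$, so that $df=\phi\,\omega$, where $\omega=\sum_{j=1}^k a_j(x_j)\,dx_j$ is closed since each $a_j$ is univariate. Setting $A_j(x_j):=\int a_j(x_j)\,dx_j$ and $U:=A_1(x_1)+\cdots+A_k(x_k)$ (possibly transcendental, with logarithmic terms coming from poles of the $a_j$), the relation $df=\phi\,dU$ forces $f=H(U)$ for some univariate function $H$.

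For the final classification, I would decompose each $a_j$ by partial fractions as $\widetilde P_j(x_j)+\sum_\alpha c_{j\alpha}/(x_j-\beta_{j\alpha})$, so that $A_j=P_j(x_j)+\sum_\alpha c_{j\alpha}\log(x_j-\beta_{j\alpha})$. Polynomiality of $f=H(U)$ then imposes a rigid dichotomy: either all $c_{j\alpha}$ vanish, in which case each $A_j=P_j$ is a polynomial, $U=\sum_j P_j(x_j)$ is polynomial, $H$ must be polynomial, and one obtains the additive form $f=h(p_1(x_1)+\cdots+p_k(x_k))$; or some $c_{j\alpha}\neq 0$, and analyzing the polynomial $f$ near the hypersurfaces $\{x_j=\beta_{j\alpha}\}$ forces every $\widetilde P_j\equiv 0$ and every residue $c_{j\alpha}$ to be proportional, by a common scalar $\lambda$, to a positive integer, so that $W:=e^{\lambda U}=\prod_j p_j(x_j)$ is a polynomial and $f=h(W)$ for a polynomial $h$, yielding the multiplicative form. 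The main obstacle is precisely this classification step: ruling out mixed situations in which some $a_j$ carries both a nontrivial polynomial part and nontrivial logarithmic singularities, or in which the residues are irrational, relies on a careful local-algebraic analysis of $f$ exploiting its polynomiality.
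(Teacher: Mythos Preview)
Your proposal is broadly correct, and its core coincides with the paper's proof: both arguments establish that each ratio $f_{x_j}/f_{x_l}$ depends only on $(x_j,x_l)$, and then separate variables via the cocycle identity $A_{jl}A_{lm}=A_{jm}$ to obtain univariate (rational) functions $a_j$ with $f_{x_j}/f_{x_l}=a_j(x_j)/a_l(x_l)$. The differences lie at the two ends of the argument.

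For the first step, you take a detour through a L\"uroth-type decomposition $f=F_i(x_i,u_i)$ and then apply the chain rule to get $f_{x_j}/f_{x_l}=(u_i)_{x_j}/(u_i)_{x_l}$, independent of $x_i$. The paper instead reads the rank-$1$ condition on $J_{T_{f,x_\ell}}$ directly: the columns indexed by $\partial/\partial x_j$ and $\partial/\partial x_l$ are proportional, so $\partial_{x_j}\alpha_i^{(\ell)}=u\,\partial_{x_l}\alpha_i^{(\ell)}$ for all $i$, and summing against $x_\ell^i$ gives $f_{x_j}=u\,f_{x_l}$ with $u$ independent of $x_\ell$. This bypasses the polynomial-L\"uroth statement you invoke, which as formulated (with \emph{polynomial} $u_i,F_i$) would itself need justification; the weaker rational version, however, is enough for your purposes.

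For the final classification, the paper simply quotes a lemma of Raz--Shem~Tov taking the differential identity $f_{x_j}/r_j(x_j)\equiv f_{x_l}/r_l(x_l)$ as input and returning the additive/multiplicative dichotomy. Your integration of the closed form $\omega=\sum a_j(x_j)\,dx_j$ followed by a residue analysis is essentially a sketch of a proof of that lemma. As you acknowledge, the delicate point is ruling out the ``mixed'' case; note also that over $\RR$ your partial-fraction expansion of $a_j$ may produce irreducible quadratic denominators (hence $\arctan$ terms upon integration), so the analysis is cleanest after passing to $\CC$. In sum, your route is more self-contained but leaves more to be checked, whereas the paper's is shorter by outsourcing both the ratio computation and the final classification.
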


 Finally, 
 we present an application of our results to the following Erd\H{o}s-type combinatorial geometric problem. 
 Let $\nu$ denote the moment curve in $\RR^d$, parameterized by
 $$\nu(t)=(t,t^2,\ldots,t^d),\quad t\in \RR.$$
 Let $P\subset \nu$ be a finite set of $n$ points. For any distinct $p_1,\ldots, p_{d+1}\in \nu$, let $\sigma=\sigma(p_1,\ldots,p_{d+1})$ denote the $d$-simplex which is the convex hull of $p_1,\ldots,p_{d+1}$ in $\RR^d$, and let  ${\rm vol}(\sigma)$ denote its $d$-dimensional  volume.
 Define
 $$
 \Delta(P)= \left\{{\rm vol}(\sigma(p_1,\ldots,p_{d+1}))\mid p_1,\ldots,p_{d+1}\in P\right\}.$$

 We have the following theorem.
\begin{thm}\label{thm:app}
    Let $\nu$ be  the moment curve in $\RR^d$ and let $P\subset \nu$ be any finite set of size $n$. Then, for every $\eps>0$, 
    $$
|\Delta(P)|=\Omega\left(n^{\frac{5d-4}{2d}-\eps}\right),
$$
    where the implicit constant depends only on $\eps$ and on $d$.
\end{thm}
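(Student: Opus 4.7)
The plan is to invoke Theorem~\ref{mainthm} applied to the $(d+1)$-variate Vandermonde polynomial
$$
f(t_1,\ldots,t_{d+1}) := \prod_{1\le i<j\le d+1}(t_j-t_i).
$$
Standard row reduction of the $d\times d$ matrix whose columns are $\nu(t_i)-\nu(t_1)$ for $i=2,\ldots,d+1$ yields the identity $d!\cdot {\rm vol}(\sigma(\nu(t_1),\ldots,\nu(t_{d+1}))) = |f(t_1,\ldots,t_{d+1})|$. Writing $P=\{\nu(t):t\in A\}$ for some $A\subset\RR$ with $|A|=n$, and taking $A_1=\cdots=A_{d+1}=A$, each value of $f$ on $A_1\times\cdots\times A_{d+1}$ equals either $0$ or $\pm d!\cdot v$ for some $v\in\Delta(P)$, so
$$
|f(A_1,\ldots,A_{d+1})| \le 2|\Delta(P)|+1.
$$

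The crucial step is the claim ${\rm rank}(f)=d$, which together with Theorem~\ref{mainthm} (applied with $k=d+1$ and $r=d\ge 2$) gives the exponent $(5d-4)/(2d)-\eps$. Since $f$ is antisymmetric under permutations of its variables, ${\rm rank}_{t_i}(f)$ is independent of $i$, and it suffices to compute ${\rm rank}_{t_1}(f)$. Expanding in $t_1$,
$$
f = V(t_2,\ldots,t_{d+1})\cdot\prod_{j=2}^{d+1}(t_j-t_1) = V\sum_{i=0}^{d}(-1)^i\, e_{d-i}(t_2,\ldots,t_{d+1})\, t_1^i,
$$
where $V$ is the Vandermonde in $t_2,\ldots,t_{d+1}$ and $e_j$ is the $j$-th elementary symmetric polynomial. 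The coefficient map $T:\RR^d\to\RR^{d+1}$ is therefore $(t_2,\ldots,t_{d+1})\mapsto \bigl((-1)^i V\cdot e_{d-i}\bigr)_{i=0}^d$. At any point where $V\neq 0$, the image $T(t_2,\ldots,t_{d+1})$ determines the univariate polynomial $q(t_1):=\sum_i \alpha_i t_1^i$, whose roots are precisely $\{t_2,\ldots,t_{d+1}\}$. Hence the generic fiber of $T$ is a single $S_d$-orbit of size $d!$; being finite, with a $d$-dimensional source, it forces the image of $T$ to be $d$-dimensional, so $J_T$ attains rank $d$ on a Zariski-open set. Thus ${\rm rank}(f)=d$.

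Combining, Theorem~\ref{mainthm} gives $|f(A_1,\ldots,A_{d+1})|=\Omega(n^{(5d-4)/(2d)-\eps})$, and therefore $|\Delta(P)|=\Omega(n^{(5d-4)/(2d)-\eps})$, as desired. The main obstacle in this plan is the rank computation: naively the extra Vandermonde factor $V$ might obscure matters, but the factorization above reduces everything to the classical observation that a univariate polynomial's roots are determined up to permutation by its coefficient vector, which yields generic finiteness of $T$ for free.
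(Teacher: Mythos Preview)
Your proof is correct and follows the same overall strategy as the paper: identify the $(d+1)$-variate Vandermonde polynomial, relate its image on $A^{d+1}$ to $\Delta(P)$, verify that its rank equals $d$, and invoke Theorem~\ref{mainthm}. The setup and the reduction to the rank computation are essentially identical to the paper's.

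The genuine difference lies in how you establish ${\rm rank}(f)=d$. The paper factors $f=g\cdot\hat f$ with $g=V$ and $\hat f=\prod_k(x_{d+1}-x_k)$, writes out the Jacobian $J_T$ explicitly, and after row operations reduces to showing that the Jacobian of the elementary symmetric polynomials $(s_0,\ldots,s_{d-1})$ in $d$ variables is nonsingular; this is done by an explicit determinant computation (Claim~\ref{symminvertible}), yielding $\det M=\prod_{i<j}(x_j-x_i)$. Your argument is more conceptual: you observe that the coefficient vector of $f$ as a polynomial in $t_1$ recovers the monic polynomial $\prod_j(t_j-t_1)$ up to the scalar $V$, hence its root set $\{t_2,\ldots,t_{d+1}\}$, so the generic fiber of the coefficient map $T$ is a single $S_d$-orbit; generic finiteness of a polynomial map $\RR^d\to\RR^{d+1}$ forces $J_T$ to have generic rank $d$. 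This bypasses the explicit linear algebra entirely. The paper's route gives an exact formula for the relevant Jacobian determinant (potentially useful elsewhere), while yours is shorter and makes transparent \emph{why} the rank is full: coefficients determine roots.
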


 Theorem~\ref{thm:app} is obtained by identifying a $(d+1)$-variate polynomial $f$ whose expansion over a certain $n\times \cdots \times n$ grid in $\RR^{d+1}$ corresponds to the number of distinct volumes of $d$-simplices spanned by $P$. We then show that $f$ has rank $d$ and apply our main Theorem~\ref{mainthm}. 
 
\paragraph{Organization of the paper.} The paper is organized as follows. In Section~\ref{sec:pre} we recall an incidence bound that will serve as a key tool in our arguments. In Section~\ref{sec:special}, we establish a special case of our main result, Theorem~\ref{mainthm}, and in Section~\ref{sec:proofmain}, we complete its proof. The proof of 
Theorem~\ref{rank1char} is provided in Section~\ref{sec:char}. Finally, Section~\ref{sec:app} contains the proof of Theorem~\ref{thm:app}.

\section{Incidences between points and algebraic curves}\label{sec:pre}
For a finite set of points ${\cal P}\subset \RR^2$ and a finite set of planar curves ${\cal C}$, we let $I({\cal P}, {\cal C})$ denote the set of point-curve incidences; that is
$$
I({\cal P},{\cal C})=\{(p,\gamma)\in {\cal P}\times{\cal C}\mid p\in \gamma\}.
$$
The classical Szemer\'edi--Trotter theorem~\cite{SzeTro} asserts that, for the special case where ${\cal C}$ is a set of lines, and putting  $m:=|{\cal P}|$ and $n:=|{\cal C}|$, one has 
$$
\left|I({\cal P},{\cal C})\right|=O\left(m^{2/3}n^{2/3}+m+n\right).
$$

Since the Szemer\'edi--Trotter result, many alternative proofs and analogue problems have been studied. Today incidence problems play a fundamental role in combinatorial geometry.
For our result we will need an extension of the Szemer\'edi--Trotter theorem to point-curve incidence problems, where the curves are algebraic and come from an $s$-dimensional family of curves. We now present the definition from Sharir--Zahl~\cite{ShaZah}.

A bivariate polynomial $h\in\RR[x,y]$ of degree at most $D$ is a linear combination of the form
$$
h(x,y)=\sum_{0\le i+j\le D}c_{ij}x^iy^j.
$$
Note that the number of monomials $x^iy^j$ such that $0\le i+j\le D$ is $\binom{D+2}{2}$.
In this sense, every point $\vec{c}\in \RR^{\binom{D+2}{2}}$ (other than the all-zero vector) can be associated with a curve in $\RR^2$, given by the zeroset of the bivariate polynomial whose coefficients are the entries of $\vec{c}$. 
If $\lambda\neq 0$, then $f$ and $\lambda f$ have the same zero-set.
Thus, the set
of algebraic curves that can be defined by a polynomial of degree at most $D$ in $\RR^2$ can
be identified with the points in the projective space 
${\bf P}\RR^{\binom{D+2}{2}}$.

In \cite{ShaZah}, Sharir and Zahl defined an \emph{$s$-dimensional family of plane curves of degree at most $D$} to be  an algebraic variety $F\subset{\bf P}\mathbf \RR^{\binom{D+2}{2}}$ such that $\dim(F)=s$. We will call the degree of the variety $F$ the \emph{complexity of the family}. 

They then proved the following incidence bound:
\begin{thm}[{\bf Sharir--Zahl~\cite{ShaZah}}]\label{SharirThm} 
Let $\cal{F}$ be an $s$-dimensional family of plane curves of degree at most $D$ and complexity at most $K$.
Let $\mathcal P$ be a set of $m$ points in the plane and let ${\cal C}\subset{\cal F}$ be a set of $n$ plane curves. 
Suppose that no two of the curves in ${\cal C}$ share a common irreducible component.  Then
for every $\varepsilon >0$, 
we have
$$
I\left(\mathcal P,\mathcal C\right) = 
O_\varepsilon\left(m^{\frac{2s}{5s-4}}n^{\frac{5s-6}{5s-4}+\varepsilon} \right) + 
O\left(m^{2/3} n^{2/3}+m+n \right),
$$
where the constant of proportionality depends on $s,K,D$ and in the first term also on $\varepsilon$.
\end{thm}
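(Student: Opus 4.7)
The plan is to prove the incidence bound via a two-level polynomial partitioning argument, extending the Guth--Katz approach to $s$-dimensional curve families, with the second level of partitioning taking place in the parameter space $\mathcal{F}$.

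First, I would apply the Guth--Katz polynomial partitioning theorem to choose a polynomial $P\in\RR[x,y]$ of degree $r$ (to be optimized later) such that $\RR^2\setminus Z(P)$ decomposes into $O(r^2)$ open cells, each containing at most $O(m/r^2)$ points of $\mathcal{P}$. I would split $\mathcal{C}=\mathcal{C}_0\cup\mathcal{C}_1$, where $\mathcal{C}_0$ consists of curves contained in $Z(P)$, and $\mathcal{P}=\mathcal{P}_0\cup\mathcal{P}_1$ with $\mathcal{P}_0=\mathcal{P}\cap Z(P)$. By Bezout, each $\gamma\in\mathcal{C}_1$ meets $Z(P)$ in at most $Dr$ points, so it enters at most $Dr+1$ cells; in particular, $I(\mathcal{P}_0,\mathcal{C}_1)=O(nDr)$ by the no-common-component assumption.

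For $I(\mathcal{P}_1,\mathcal{C}_1)$ I would apply a Pach--Sharir-type base-case bound for $s$-dimensional families within each cell,
$$
I(\mathcal{P}_1\cap\Omega,\,\mathcal{C}_1\text{ meeting }\Omega)=O\bigl(m_\Omega^{s/(2s-1)}n_\Omega^{(2s-2)/(2s-1)}+m_\Omega+n_\Omega\bigr),
$$
and sum over cells using H\"older together with $m_\Omega\le m/r^2$ and $\sum_\Omega n_\Omega=O(nDr)$. The delicate case is $I(\mathcal{P}_0,\mathcal{C}_0)$: curves in $\mathcal{C}_0$ correspond to a proper subvariety of $\mathcal{F}$ of dimension at most $s-1$, because the constraint of lying in $Z(P)$ generically imposes at least one algebraic condition on the $s$-dimensional parameter, and the no-common-component hypothesis keeps $|\mathcal{C}_0|$ controlled. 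I would handle these either by induction on $s$ or by a second polynomial partitioning in $\mathcal{F}$ of degree $r'$, recursing on the restricted $(s-1)$-dimensional family of curves lying on $Z(P)$.

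The main obstacle is the precise parameter optimization: one must choose $r$ (and $r'$, if used) so that the interior-cell contribution, the on-partition Bezout contribution, and the recursive contribution from $\mathcal{C}_0$ all balance. The exponents $\frac{2s}{5s-4}$ and $\frac{5s-6}{5s-4}$ arise exactly from this balance, and the $\varepsilon$ loss absorbs the logarithmic overhead of iterating the partitioning. A secondary subtlety is ensuring that the partitioning polynomial is ``generic enough'' to avoid degenerate situations where too many curves of $\mathcal{C}$ end up in $\mathcal{C}_0$; this motivates an inductive scheme in which, if the count of such curves is too large, one instead restricts attention to $Z(P)$ and invokes the theorem for the lower-dimensional family. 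For $s=2$ the exponents reduce to $2/3$ and $2/3$, correctly recovering the Szemer\'edi--Trotter bound up to the $\varepsilon$ loss.
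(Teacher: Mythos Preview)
The paper does not contain a proof of this statement: Theorem~\ref{SharirThm} is quoted from Sharir--Zahl~\cite{ShaZah} as a black-box tool in Section~\ref{sec:pre} and is then invoked inside the proof of Proposition~\ref{r_eq_k}. There is therefore nothing in the paper to compare your proposal against.

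As a separate comment on your sketch itself: the mechanism you describe --- primal polynomial partitioning followed by the Pach--Sharir $s$-parameter bound inside each cell --- does not, on its own, produce the exponents $\tfrac{2s}{5s-4}$ and $\tfrac{5s-6}{5s-4}$. Summing the per-cell Pach--Sharir bound over $O(r^2)$ cells, with $m_\Omega\le m/r^2$ and $\sum_\Omega n_\Omega=O(nr)$, and optimizing in $r$, essentially recovers the global Pach--Sharir exponent $\tfrac{s}{2s-1}$ rather than anything sharper; the ``balance'' you allude to does not give $5s-4$ in the denominator without an additional ingredient. In the actual Sharir--Zahl argument that ingredient is a bound on the number of cuts required to turn the $n$ curves into a family of \emph{pseudo-segments} (this is where the $s$-dimensionality of $\mathcal{F}$, and a partitioning in the parameter space, are genuinely exploited), after which one applies a point--pseudo-segment incidence bound. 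Your idea of a second partition in $\mathcal{F}$ is in the right spirit, but as written it is reserved only for the trapped curves $\mathcal{C}_0$; the improvement over Pach--Sharir has to come from the main term, not from the boundary term.
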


\section {A special case}\label{sec:special}
In this section we  prove the special case of Theorem~\ref{mainthm} with $r=k-1$. This will serve as a key ingredient for the proof of the general result.

\begin{prop}\label{r_eq_k}
Let $k\ge 3$, $f\in\RR\left[x_1\ldots,x_k\right]$, and assume that ${\rm{rank}}(f)=k-1$. Then, for every $\varepsilon>0$,
the following holds: Let $A_0,\ldots,A_k\subseteq \RR$ be finite, each of size $n$. Then  
$$
\lvert f(A_1,\ldots,A_k)\rvert
= 
\Omega\left(n^{\frac{5(k-1)-4}{2(k-1)}-\varepsilon}\right), 
$$
where the constant of proportionality depends on $\deg(f)$, on $k$, and on $\varepsilon$.
\end{prop}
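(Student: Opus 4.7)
The plan is to cast the problem as an incidence-counting problem between points and curves in $\RR^2$ and apply the Sharir--Zahl bound (Theorem~\ref{SharirThm}) with $s=k-1$, in the spirit of the Elekes--R\'onyai paradigm. Permuting variables if necessary, I may assume ${\rm rank}_{x_1}(f)=k-1$, so the coefficient map $T:=T_{f,x_1}\colon\RR^{k-1}\to\RR^{d_{x_1}+1}$ has generic Jacobian rank $k-1$; consequently the Zariski closure of $T(\RR^{k-1})$ is an irreducible $(k-1)$-dimensional variety of degree bounded in terms of $\deg(f)$, and $T$ is generically finite-to-one onto its image.

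For each $\bar a=(a_2,\ldots,a_k)\in A_2\times\cdots\times A_k$ let $C_{\bar a}\subset\RR^2$ be the graph of $x\mapsto f(x,\bar a)$. Identifying each $C_{\bar a}$ with the coefficient vector $T(\bar a)$ realises $\mathcal C:=\{C_{\bar a}\}$ as a subfamily of a $(k-1)$-dimensional family of plane curves of bounded complexity. After discarding $O(n^{k-2})$ exceptional tuples (where $T$ degenerates or has positive-dimensional fibre) one has $|\mathcal C|=\Theta(n^{k-1})$ with distinct curves sharing no common component. Setting $M:=|f(A_1,\ldots,A_k)|$ and $\mathcal P:=A_1\times f(A_1,\ldots,A_k)$ gives $|\mathcal P|=nM$, and each $(a_1,\bar a)$ yields an incidence $(a_1,f(a_1,\bar a))\in C_{\bar a}$, so $I(\mathcal P,\mathcal C)=\Omega(n^k)$.

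Plugging this into Theorem~\ref{SharirThm} with $s=k-1$, $m=nM$ and $|\mathcal C|\asymp n^{k-1}$, a short computation shows that the exponent of $n$ in the first term $(nM)^{2s/(5s-4)}(n^{k-1})^{(5s-6)/(5s-4)+\varepsilon}$ collapses to $k-1+O(\varepsilon)$; requiring this term to be $\gtrsim n^k$ forces $M\gtrsim n^{(5(k-1)-4)/(2(k-1))-O(\varepsilon)}$, exactly the claimed bound. The Szemer\'edi--Trotter term $(nM)^{2/3}(n^{k-1})^{2/3}$ instead yields $M\gtrsim n^{k/2}$, which for $k\ge 3$ is stronger than the target (the inequality $k/2\ge(5(k-1)-4)/(2(k-1))$ reduces to $(k-3)^2\ge 0$), and the linear terms $m$ and $|\mathcal C|$ are dominated by $n^k$ for large $n$, so the first term is the binding one and gives the bound.

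The main obstacle I anticipate is algebro-geometric bookkeeping: certifying that $\mathcal C$ really embeds as a Sharir--Zahl family of the claimed dimension and bounded complexity, and that generic finiteness of $T$ is strong enough to guarantee $\Omega(n^{k-1})$ pairwise-distinct curves no two of which share an irreducible component (a hypothesis of Theorem~\ref{SharirThm}). Both reduce to upgrading the infinitesimal rank hypothesis to a global algebraic dimension count for the image of $T$, and controlling the exceptional locus where $T$ fails to be finite-to-one so that it swallows only a negligible fraction of $A_2\times\cdots\times A_k$. Once these structural facts are in place, the remaining work is the direct Sharir--Zahl computation sketched above.
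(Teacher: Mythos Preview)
Your plan is correct and matches the paper's proof essentially line for line: the paper also reduces to a point--curve incidence problem with $\mathcal P=A_1\times B$ and $\mathcal C=\{\,y=f(x,a_2,\ldots,a_k)\,\}$, applies Sharir--Zahl with $s=k-1$, and checks that the leading term dominates. The one place where the paper is more explicit than your sketch is exactly the ``main obstacle'' you anticipate: it fixes a square $(k-1)\times(k-1)$ minor $\hat T$ of the coefficient map with $\det J_{\hat T}\not\equiv 0$, discards the $O(n^{k-1})$ tuples in $\{\det J_{\hat T}=0\}$ via Schwartz--Zippel, and then uses the inverse function theorem to show that each surviving curve has only $O_{\deg f,k}(1)$ preimages (so $|S'|=\Theta(|I(\mathcal P,\mathcal C)|)$), which is a concrete realisation of your ``generic finiteness'' step.
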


\begin{proof}
Let $f,A_1,\ldots,A_k$ be as in the statement. Up to renaming of the variables, we may assume without loss of generality that ${\rm rank}_{x_1}(f)=k-1$. 
Put $$
B = f(A_1,\ldots,A_k)
$$
We aim to lower bound $|B|$. 
For this, consider: $$
S = \left\{ \left(x_1,x_2,\ldots,x_k,y\right)\in A_1 \times A_2\times \cdots \times A_k\times B \mid y = f\left(x_1,\ldots,x_k\right)\right\}. 
$$
Notice that, for every $(a_1\ldots,a_k)\in  A_1\times\ldots\times A_k$, we have that 
$(a_1\ldots,a_k,f(a_1,\ldots,a_k))\in S$, and so 
\begin{equation}\label{sizeS}
\lvert S\rvert = n^{k}. 
\end{equation}

We claim that, for every $\varepsilon>0$, one has
\begin{equation}\label{upbndS}
\lvert S\rvert =O\left( \lvert B \rvert^{\frac{2(k-1)}{5(k-1)-4}}n^{(k-1)(1+\varepsilon)}\right),
\end{equation}
where the constant of proportionality depends on $\eps$, $k$, and $\deg(f)$.
Combining \eqref{sizeS} and \eqref{upbndS},  we get 
$$
n^k =O\left( \lvert B\rvert^{\frac{2(k-1)}{5(k-1)-4}}n^{(k-1)(1+\varepsilon)}
\right)$$
or
$$
|B|=\Omega 
\left(
n^{\frac{5(k-1)-4}{2(k-1)}-\eps'}
\right),$$
where $\eps'=\tfrac{5(k-1)-4}{2}\eps$, which proves the proposition. 

So in order to complete the proof of Proposition~\ref{r_eq_k}
we only need to prove \eqref{upbndS}. Let $T=T_{f,x_1}$ be the coefficient map defined in the introduction. By assumption, ${\rm rank}(J_T)=k-1$. Thus, there exist indices $\left(i_1,\ldots, i_{k-1}\right) $ such that for 
$$
\hat{T}:(x_2\ldots,x_k)
\mapsto(\alpha_{i_1}(x_2\ldots,x_k),\ldots,\alpha_{i_{k-1}}(x_2\ldots,x_k)),
$$ 
we have  
$$\det J_{\hat{T}} \not\equiv 0.$$
Define 
\begin{align*}
S_0 &=\left\{\left(a_{1},\ldots,a_{k},b\right)\in S \mid\det J_{\hat{T}}\left(a_{2},\ldots,a_{k}\right)=0\right\}, \\
S' & = S\setminus S_0.
\end{align*} 
Clearly $|S|= |S_0|+|S'|$.
Observe that,
$$
|S_0|=|A_1|\cdot\left|\left(A_2\times\cdots\times A_k\right) \cap \left\{\det J_{\hat{T}}=0\right\}\right|\le n\cdot \deg(\det J_{\hat T})n^{k-2},
$$
where the inequality is due to the Schwartz--Zippel Lemma (see \cite{Schw,Zipp}).
Thus, we get
\begin{equation}
\label{S0bound}
\left| S_0\right|=O(n^{k-1}),
\end{equation}
where the constant of proportionality depends on $\deg(f)$ and on $k$. 

We now bound $\lvert S'\rvert$. For this, we reduce the problem into a point-curve incidence problem in the plane as follows.
With each $(a_2,\ldots,a_k)\in A_2\times\cdots\times A_k$ for which $\det J_{\hat{T}}(a_2,\ldots,a_k) \neq 0$, we associate a curve $\gamma_{a_2,\ldots, a_k}$ in $\RR^2$ given by the equation $$y=f(x,a_2,\ldots,a_k).$$
Note that $\gamma_{a_2,\ldots, a_k}$ is irreducible for every $(a_2,\ldots,a_k)\in A_2\times\cdots\times A_k.
$ Let 
\begin{align*}
\mathcal P & =A_{0}\times B,\\
\mathcal C & =\left\{ \gamma_{a_{2},\ldots,a_{k}}\mid \det J_{\hat{T}}(a_1,\ldots,a_k)\neq 0\right\};
\end{align*}
note that curves in ${\cal C}$ are taken without multiplicity. 
Let $I(\mathcal P,\mathcal C)$ denote the set of point-curve incidences between $\mathcal P$ and $\mathcal C$.

\begin{clm}\label{clm:incidencereduction}
We have
\begin{equation}
\lvert S'\rvert =\Theta\left( \lvert I(\mathcal P,\mathcal C)\rvert\right),
\end{equation}
where the constant of proportionality depends only on $\deg f$ and on $k$.
\end{clm}
\begin{proof}
By definition, if $(a_1,a_{2},\ldots,a_{k},b)\in S'$ then $((a_1,b),\gamma_{a_2,\ldots,a_k})\in I(\mathcal P,\mathcal C)$.
So, to prove the claim, it suffices to show that every $(p,\gamma)\in I(\mathcal P,\mathcal C)$ corresponds to at most $O(1)$ elements of $S'$.

For $\gamma\in \mathcal C$, write 
$$
m(\gamma)=\left\{(a_2,\ldots,a_k)\mid \text{$\gamma_{a_2,\ldots,a_k}=\gamma$ and $\det J_{\hat T}(a_2,\ldots,a_k)\neq 0$}\right\}.
$$
We need to prove that 
\begin{equation}\label{mgamma}
|m(\gamma)|=O(1),
\end{equation}
with constant of proportionality that depends only on $\deg(f)$ and on $k$. 

By the definition of the set $\mathcal C$, we have that $\gamma$ is given by an equation of the form
$$
y=\sum_{i=0}^{d_{x_1}}c_ix^i,
$$
for some coefficients $c_0,\ldots, c_{d_{x_1}}\in \RR$.
Let $V$ denote the algebraic variety which is given by the system of equations
\begin{align}
\alpha_{i_1}(x_2,\ldots,x_k)&=c_{i_1}\nonumber\\
\alpha_{i_2}(x_2,\ldots,x_k)&=c_{i_2}\label{sysmgamma}\\
\vdots&\nonumber\\
\alpha_{i_{k-1}}(x_2,\ldots,x_k)&=c_{i_{k-1}}.\nonumber
\end{align}
Then 
$$
m(\gamma)\subset V.
$$

Write $V=V_0\cup V_+$, where $V_0$ is the union of all $0$-dimensional irreducible components of $V$, and $V_+$ is the union of all other irreducible components of $V$. 
Recall that, by properties of real algebraic varieties, $V_0$ is finite, and 
$$
|V_0|=O(1),
$$ 
with a constant that depends only of $\deg(f)$ and on $k$.

Thus, to prove \eqref{mgamma}, it suffices to show that $m(\gamma)\subset V_0$. 
Assume, for contradiction, that this is not the case. That is, there exists $(a_2,\ldots,a_k)\in m(\gamma)\cap V_+$. 
Then, by definition, we have 
\begin{align*}
\hat T&(a_2,\ldots,a_k)=(c_{i_1},\ldots,c_{i_{k-1}})\\
\det &J_{\hat T}(a_2,\ldots,a_k)\neq 0.
\end{align*}
By the inverse function theorem, there exists an open neighborhood, $N$, of $(a_2,\ldots,a_k)$ such that $\hat T$ restricted to $N$ is invertible. 
In particular,
$$
N\cap \hat T^{-1}\{(c_{i_1},\ldots,c_{i_{k-1}})\}= \{(a_2,\ldots,a_k)\}.
$$
On the other hand,
$$
N\cap \hat T^{-1}(c_{i_1},\ldots,c_{i_{k-1}})= N\cap V.
$$
Since $N$ is a neighborhood of $(a_1,\ldots,a_{k-1})$ and the latter lies on an irreducible component of $V$ of dimension at least 1, the intersection $N\cap V$ must be infinite. This leads to a contradiction, and thus $m(\gamma)\subset V_0$. This completes the proof of Claim~\ref{clm:incidencereduction}.
\end{proof}

In view of Claim~\ref{clm:incidencereduction}, in order to bound $|S'|$ it suffices to bound $\left|I\left(\mathcal P,\mathcal C\right)\right|.$ We have 
$\lvert\mathcal P\rvert = n\lvert B\rvert $ and $\lvert\mathcal C\rvert \le n^k$. 
Since the curves in $\mathcal C$ are irreducible, every two distinct curves $\gamma,\gamma'$ in $\mathcal C$ intersect in at most $d_{x_1}^2$ points, by Bezout's Theorem (see e.g. \cite[Corollary 7.8]{Hart}). 
We can therefore apply 
Theorem~\ref{SharirThm}, which gives
\begin{align*}
\lvert I\left(\mathcal P,\mathcal C\right)\rvert  
&=O\left( \left(\lvert B\rvert\cdot n\right)^{\frac{2(k-1)}{5(k-1)-4}}n^{(k-1)\left(\frac{5(k-1)-6}{5(k-1)-4}
+\varepsilon\right)}
+(\lvert B\rvert n)^{2/3}(n^{k-1})^{2/3}+\lvert B\rvert n+n^{k-1}\right).
\end{align*}
or
\begin{align*}
\lvert I\left(\mathcal P,\mathcal C\right)\rvert  
&=O\left( \lvert B \rvert^{\frac{2(k-1)}{5(k-1)-4}}n^{(k-1)\left(1+\varepsilon\right)}+|B|^{\frac23}n^{\frac{2k}{3}}+
\lvert B\rvert n+n^{k-1}\right).
\end{align*}
Note that we may assume without loss of generality that  the first summand is dominant. Indeed, 
the second summand is dominant if
$$|B|^{\frac{2(k-1)}{5(k-1)-4}}n^{(k-1)(1+\varepsilon)}\le|B|^{\frac{2}{3}}n^{\frac{2k}{3}}$$
or 
$$
|B|\ge 
n^{\frac{5(k-1)-4}{4}+O(\varepsilon)},
$$
which is stronger than the lower bound we wish to prove on $|B|$, for every $k\ge 3$.
Similarly, the third summand is dominant 
if 
$$|B|^{\frac{2r}{5r-4}}n^{r(1+\varepsilon)}\le |B|n
$$
or 
$$|B|\ge 
n^{\frac{(k-2)(5(k-1)-4)}{3(k-1)-4}+\eps\frac{(k-1)(5(k-1)-4)}{3(k-1)-4} }
,
$$
which is better than the lower we want to prove on $|B|$ for every $k$, as is easy to verify. Finally, the fourth summand is always subsumed by the first one.

Hence, either the conclusion of Proposition~\ref{r_eq_k} holds, or we obtain
$$
\lvert I\left(\mathcal P,\mathcal C\right)\rvert  
=
O\left(
\lvert B \rvert^{\frac{2(k-1)}{5(k-1)-4}}n^{(k-1)\left(1+\varepsilon\right)}
\right).
$$
In view of Claim~\ref{clm:incidencereduction} and combined with \eqref{S0bound}, the inequality \eqref{upbndS} follows. This completes the proof Proposition~\ref{r_eq_k}.
\end{proof}

\section{Proof of Theorem~\ref{mainthm}}\label{sec:proofmain}
The following lemma shows that the main Theorem~\ref{mainthm} can in fact be reduced to the statement of Proposition~\ref{r_eq_k}.
\begin{lem}\label{lem:reductionr}
Let $f\in\RR\left[x_1\ldots,x_k\right]$ and assume that ${\rm{rank}}_{x_1}(f)=r<k-1$.
Then, up to renaming of the variables $x_2,\ldots,x_k$, we have that $$
g(x_1,\ldots,x_{r+1}):= f\left(x_1,x_2,\ldots,x_{r+1},y_{r+2},\ldots,y_k\right) $$
is a $(r+1)$-variate polynomial in $\RR\left[x_1,\ldots,x_{r+1}\right]$ with ${\rm rank}_{x_1}(g)=r$,
for all $\left(y_{r+2},\ldots,y_k\right) \in \RR^{k-r-1}\setminus Z_0$, where $Z_0$ is some subvariety of $\RR^{k-r-1}$ of codimension at least 1.
\end{lem}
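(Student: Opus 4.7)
The plan is to use the definition of $\mathrm{rank}_{x_1}(f)$ as the generic rank of the polynomial matrix $J_T$: standard linear algebra over the polynomial ring tells us this is the maximum size of a minor of $J_T$ that is not the zero polynomial. Once such a minor is identified, the lemma will reduce to the elementary fact that a nonzero polynomial in $k-1$ variables remains nonzero after specializing any fixed subset of its variables to values outside a proper subvariety.

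Concretely, since $\mathrm{rank}_{x_1}(f)=r$, I would fix row indices $i_1<\cdots<i_r$ in $\{0,\ldots,d_{x_1}\}$ and column indices $j_1<\cdots<j_r$ in $\{2,\ldots,k\}$ such that
$$
M(x_2,\ldots,x_k):=\det\left(\frac{\partial \alpha_{i_a}}{\partial x_{j_b}}\right)_{1\le a,b\le r}
$$
is a nonzero element of $\RR[x_2,\ldots,x_k]$. After renaming the variables $x_2,\ldots,x_k$, I may assume $\{j_1,\ldots,j_r\}=\{2,\ldots,r+1\}$. Expanding $M$ as a polynomial in $x_2,\ldots,x_{r+1}$ with coefficients in $\RR[x_{r+2},\ldots,x_k]$,
$$
M=\sum_{\beta} c_\beta(x_{r+2},\ldots,x_k)\,x_2^{\beta_1}\cdots x_{r+1}^{\beta_r},
$$
there must be some multi-index $\beta^*$ with $c_{\beta^*}\not\equiv 0$; I then take $Z_0\subset\RR^{k-r-1}$ to be the zero set of $c_{\beta^*}$, which is a proper subvariety of codimension at least one.

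To finish, for any $(y_{r+2},\ldots,y_k)\notin Z_0$ I write $g(x_1,\ldots,x_{r+1})=\sum_i \tilde\alpha_i(x_2,\ldots,x_{r+1})\,x_1^i$, where $\tilde\alpha_i(x_2,\ldots,x_{r+1})=\alpha_i(x_2,\ldots,x_{r+1},y_{r+2},\ldots,y_k)$. By the chain rule, the Jacobian $J_{T_{g,x_1}}$ is obtained from $J_{T_{f,x_1}}$ by retaining only the columns indexed by $x_2,\ldots,x_{r+1}$ and specializing the remaining variables to $(y_{r+2},\ldots,y_k)$. Thus the $r\times r$ minor of $J_{T_{g,x_1}}$ indexed by rows $i_1,\ldots,i_r$ is exactly $M(x_2,\ldots,x_{r+1},y_{r+2},\ldots,y_k)$, which is not identically zero because $c_{\beta^*}(y_{r+2},\ldots,y_k)\neq 0$. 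Hence $\mathrm{rank}_{x_1}(g)\ge r$; the reverse inequality is automatic since $J_{T_{g,x_1}}$ has only $r$ columns, and so $\mathrm{rank}_{x_1}(g)=r$ as required. The only conceptual point worth flagging is the identification of the rank of $J_T$ with the size of the largest non-vanishing minor; once this is in place, the argument is essentially a direct application of the identity principle for polynomials, and no substantive obstacle arises.
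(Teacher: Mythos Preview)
Your proof is correct and follows essentially the same approach as the paper: both select, after a suitable renaming, the $r$ columns of $J_{T_{f,x_1}}$ indexed by $x_2,\ldots,x_{r+1}$, exhibit a nonvanishing $r\times r$ minor (the paper packages all such minors into a sum of squares $\Delta$, you pick a single one), and define $Z_0$ via the $(y_{r+2},\ldots,y_k)$-coefficients of this polynomial. The only cosmetic difference is that the paper takes $Z_0$ to be the common zero locus of all the $y$-coefficients, whereas you take the zero locus of a single nonzero coefficient $c_{\beta^*}$; both yield a proper subvariety, so either choice suffices.
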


\begin{proof}
Let $f$ be as in the statement. Let $T=T_{f,x_1}$ be the corresponding coefficient map. By assumption
\begin{equation}\label{rankJT}
{\rm rank}\left(J_T\right) = r < k-1.
\end{equation}
Up to renaming the variables, we may assume without the loss of generality, that the first $r$ columns of $J_T$, corresponding to the variables $x_2,\ldots,x_{r+1}$, are independent. 
Observe that the matrix composed of the first $r$ columns of $J_{T}$, is in fact the Jacobian matrix of the coefficient map $T_{g,x_1}$, where $g$ is the $(r+1)$-variate polynomial given by 
$$
g\left(x_1,x_2,\ldots,x_{r+1}\right):= f\left(x_1,x_2,\ldots,x_{r+1},y_{r+2},\ldots,y_k\right);
$$
here $y_{r+2},\ldots, y_k$ are regarded as constant parameters. 
More concretely, we have that ${\rm rank}(J_{T_{g,x_1}})=r$, for every generic 
$(x_2,\ldots , x_{r+1},y_{r+2},\ldots ,y_k)\in \RR^{k}$. 

Let $\Delta$ denote the polynomial corresponding to the sum of squares of the determinants of all the $r\times r$ submatrices of  $J_{T_{g,x_1}}$.  So $\Delta$ is a multivariate polynomial, and we can write
$$
\Delta(x_2,\ldots,x_{r+1},y_{r+2},\ldots,y_k)= \sum\limits_{i} \beta_i(y_{r+2},\ldots,y_k) g_i(x_2,\ldots,x_{r+1}),
$$
with $g_i\in\RR[x_2,\ldots,x_{r+1}]$ and $\beta_i\in\RR[y_{r+2},\ldots,y_k]$.
By the definition of $\Delta$ we have 
$$
\text{${\rm rank}(J_{T_{g,x_1}}(x_2,\ldots , x_{r+1},y_{r+2},\ldots ,y_k))<r$ if and only if $\Delta(x_2,\ldots , x_{r+1},y_{r+2},\ldots ,y_k)=0$. }
$$
Thus, in view of \eqref{rankJT}, we have  $\Delta\not\equiv 0$.
In particular, 
the polynomials $\beta_i$ are not all zero. Thus, letting $$
Z_0 := \left\{\left(y_{r+2},\ldots,y_k\right) \mid \forall i~\beta_i(y_{r+2},\ldots,y_k) = 0\right\},
$$
we see that $Z_0$  has codimension at least 1. 
This completes the proof of the lemma. 
 \end{proof}

We can now complete the proof of our main Theorem~\ref{mainthm}.
\begin{proof}[Proof of Theorem~\ref{mainthm}]
Let $f\in \RR[x_1,\ldots,x_k]$ and assume without loss of generality that $\deg_{x_1}(f)=r\le k-2$. Apply Lemma~\ref{lem:reductionr} to $f$. Then, up to renaming of the variables, there exists an algebraic variety $Z_0\subset \RR^{k-r-1}$, of codimension at least one, and of degree $O(1)$, such that for every $(y_{r+2},\ldots,y_k)\in \RR^{k-r-1}\setminus Z_0$ we have that the polynomial 
$$
(x_1,\ldots,x_{r+1})\mapsto f(x_1,\ldots,x_{r+1},y_{r+2},\ldots,y_k)
$$
is an $(r+1)$-variate polynomial of rank $r$. 

Observe that, by the Schwartz--Zippel lemma, there exists $(a_{r+2},\ldots,a_k)\in \left(A_{r+2}\times\ldots\times A_k\right)\setminus Z_0$. Thus 
$$
g(x_1,\ldots,x_{r+1}):=f(x_1,\ldots,x_{r+1},a_{r+2},\ldots,a_k)
$$
satisfies ${\rm rank}_{x_1}(g)=r$. Thus,  by Proposition~\ref{r_eq_k}, we have that 
$$
|g(A_1,\ldots,A_{r+1})|=\Omega\left(n^{\frac{5r-4}{2r}-\varepsilon}\right).
$$ 
Noting that 
$$
g(A_1,\ldots,A_{r+1})=f(A_1,\ldots,A_{r+1},\{a_{r+2}\},\ldots,\{a_k\})\subset f(A_1,\ldots,A_k),
$$
this completes the proof of the theorem. 
\end{proof}

\section{Characterization of rank-1 polynomials}\label{sec:char}

In this section we prove Theorem~\ref{rank1char}. For the proof we will use the following lemma from Raz and Shem Tov~\cite{RazShe}.
\begin{lem}[{\bf Raz--Shem Tov~\cite[Lemma 2.3]{RazShe}}]\label{RazShem}
Let $f\in\RR[x_1,\ldots,x_k]$. Assume
that
\begin{equation}\label{rank1diff}
\frac{\frac{\partial f}{\partial x_1}(x_1,\ldots,x_k)
}
{
r_1(x_1)
}
=\cdots =
\frac{\frac{\partial f}{\partial x_k}(x_1,\ldots,x_k)
}
{
r_k(x_k)
},
\end{equation}
for some univariate real polynomials $r_1,\ldots, r_k$. 
Then, $f$ is one of the forms
\begin{align*}
f\left(x_{1},\ldots,x_{k}\right) & =h\left(p_{1}\left(x_{1}\right)+\cdots+p_{k}\left(x_{k}\right)\right)\ \ \text{ or}\\
f\left(x_{1},\ldots,x_{k}\right) & =h\left(p_{1}\left(x_{1}\right)\cdot\cdots\cdot p_{k}\left(x_{k}\right)\right),
\end{align*}
for some univariate real polynomials $h(x), p_1(x),\ldots,p_k(x)$.

\end{lem}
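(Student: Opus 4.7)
The plan is to derive the hypothesis of Lemma~\ref{RazShem} from the rank-1 assumption and then invoke that lemma. Concretely, the target is to produce univariate polynomials $r_1,\ldots,r_k$ and a common function $h(x_1,\ldots,x_k)$ such that
$$
\frac{\partial f}{\partial x_i}=r_i(x_i)\cdot h(x_1,\ldots,x_k)\qquad\text{for every }i.
$$
The first step is to show that for each pair $i\ne j$, the ratio $\rho_{i,j}:=(\partial f/\partial x_i)/(\partial f/\partial x_j)$ depends only on $x_i$ and $x_j$. Indeed, pick any $\ell\notin\{i,j\}$ and write $f=\sum_m \alpha_m(x_1,\ldots,\hat{x}_\ell,\ldots,x_k)\, x_\ell^m$. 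The assumption ${\rm rank}_{x_\ell}(f)=1$ means all rows of $J_{T_{f,x_\ell}}$ are proportional to a single vector, so that $\nabla\alpha_m=\mu_m \vec v$, and hence $\partial f/\partial x_\iota=v_\iota\cdot Q$ for every $\iota\ne\ell$, with the \emph{same} $Q:=\sum_m \mu_m x_\ell^m$. Consequently $\rho_{i,j}=v_i/v_j$ is independent of $x_\ell$; ranging $\ell$ over all indices outside $\{i,j\}$ yields $\rho_{i,j}\in\RR(x_i,x_j)$.

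Next, the cocycle identity $\rho_{i,j}\,\rho_{j,m}=\rho_{i,m}$, together with the fact that $\rho_{i,m}$ is independent of $x_j$, forces a separation $\rho_{i,j}(x_i,x_j)=s_i(x_i)/s_j(x_j)$ for some univariate rational functions $s_i$ (explicitly obtained by fixing a value of a third variable in the cocycle relation). Setting $h:=(\partial f/\partial x_i)/s_i(x_i)$, we obtain $\partial f/\partial x_i=s_i(x_i)\,h$ for every $i$, with common rational $h$. To upgrade $s_i$ to a polynomial, write $s_i=p_i(x_i)/q_i(x_i)$ in lowest terms and exploit the identity $q_i\cdot \partial f/\partial x_i=p_i\,h$: since the $p_i$'s live in pairwise coprime univariate rings and $\partial f/\partial x_i$ is a polynomial, a content argument shows that the denominator of $h$ (in lowest terms in $\RR(x_1,\ldots,x_k)$) must be constant, so $h\in\RR[x_1,\ldots,x_k]$; a further divisibility argument yields $\prod_i q_i(x_i)\mid h$, giving the clean factorization
$$
\frac{\partial f}{\partial x_i}=p_i(x_i)\prod_{j\ne i}q_j(x_j)\cdot\tilde h(x_1,\ldots,x_k).
$$

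With this identity in hand, the argument concludes by invoking Lemma~\ref{RazShem}. When all $q_j$ are constants, the right side reads $c_i\,p_i(x_i)\,\tilde h$ with $\tilde h$ common, so Lemma~\ref{RazShem} applies with $r_i=c_i p_i$ and yields the sum form of $f$. Otherwise, the factorization matches the derivative pattern of a function of a product $P_1(x_1)\cdots P_k(x_k)$, where $P_i$ is chosen so that $P_i'/P_i=p_i/q_i$, which should produce the product form. The main obstacle is exactly this last distinction: Lemma~\ref{RazShem} is stated with polynomial $r_i$, which fits the sum form cleanly but not the product form, where the natural logarithmic-derivative $r_i=p_i'(x_i)/p_i(x_i)$ is rational. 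Handling this case cleanly---by a separate case analysis on whether all $q_j$ are constants, by invoking an extended version of Lemma~\ref{RazShem} allowing rational $r_i$, or by fixing $k-2$ of the variables and reducing to the bivariate Elekes--R\'onyai characterization~\eqref{specialbivariate}---is the delicate part of the proof.
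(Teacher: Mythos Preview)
Your proposal does not address the stated lemma at all. Lemma~\ref{RazShem} takes as \emph{hypothesis} the separated differential identity
\[
\frac{\partial f/\partial x_1}{r_1(x_1)}=\cdots=\frac{\partial f/\partial x_k}{r_k(x_k)}
\]
and concludes that $f$ has one of the two special forms; the paper does not prove it but simply quotes it from \cite[Lemma~2.3]{RazShe}. What you have written is a proof sketch for \emph{Theorem~\ref{rank1char}}: you start from the rank-1 assumption on $f$, work to establish the separated identity, and then \emph{invoke} Lemma~\ref{RazShem}. Invoking a lemma is not a proof of that lemma.

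If your target was actually Theorem~\ref{rank1char}, then your approach coincides with the paper's in its essential structure. Both argue, for each pair $i\ne j$ and each third index $\ell$, that ${\rm rank}_{x_\ell}(f)=1$ forces the ratio $(\partial f/\partial x_i)/(\partial f/\partial x_j)$ to be independent of $x_\ell$; you phrase this via proportional rows of $J_{T_{f,x_\ell}}$, the paper via proportional columns, but the content is identical. Both then use a three-variable relation (your ``cocycle identity'' $\rho_{i,j}\rho_{j,m}=\rho_{i,m}$, the paper's $u(x_1,x_2)=v(x_1,x_3)/w(x_2,x_3)$) and specialize the third variable to obtain $\rho_{i,j}(x_i,x_j)=r_i(x_i)/r_j(x_j)$. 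The paper stops there and applies Lemma~\ref{RazShem} directly, silently treating the resulting $r_i$ as polynomials; you, by contrast, attempt an explicit content/divisibility argument to upgrade the rational $s_i$ to polynomials, and you correctly flag that this is where the argument is delicate. In fact the issue you raise is genuine---the $r_i$ produced this way are a priori only rational---and the clean resolution is simply that the version of the lemma in \cite{RazShe} already allows rational $r_i$, so your extra factorization step (and its attendant case split) is unnecessary.
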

\begin{proof}[Proof of Theorem~\ref{rank1char}]
Let $f$ be as in the statement.
Since $f$ depends non-trivially on each of its variables, we have in particular that 
\begin{equation}\label{eq:allrank1}
{\rm rank}_{x_i}(f)=1,\quad\text{ for each $i=1,\ldots,k$.}
\end{equation}

In view of Lemma~\ref{RazShem}, it suffices to show that $f$ satisfies the differential equation \eqref{rank1diff}, for some univariate polynomials $r_1,\ldots,r_k$. By symmetry, it suffices to prove that 
\begin{equation}\label{identityforx1x2}
\frac{\frac{\partial f}{\partial x_1}(x_1,\ldots,x_k)
}
{
r_1(x_1)
}
=\frac{\frac{\partial f}{\partial x_2}(x_1,\ldots,x_k)
}
{
r_2(x_2)
}
\end{equation}

We write 
\begin{equation}
f\left(x_{1},\ldots,x_{k}\right)
=\sum_{i=0}^{d_{x_{1}}}\alpha_{i}^{(1)}\left(x_{2},\ldots,x_{k}\right)x_{1}^{i}=
\cdots
=\sum_{i=0}^{d_{x_{k}}}\alpha_{i}^{(k)}\left(x_{1},\ldots,x_{k-1}\right)x_{k}^{i},
\end{equation}
where $d_{x_j}$ stands for the degree of $f$ as a univariate polynomial in the variable $x_j$. 
Then for every $j,\ell\in\left\{ 1,\ldots,k\right\}$, such that $j\neq \ell$, we have
\begin{equation}
\label{derivwithcolumns}
\frac{\partial f}{\partial x_{j}}\left(x_{1},\ldots,x_{k}\right)=\sum_{i=0}^{d_{x_{\ell}}}\frac{\partial\alpha_{i}^{(\ell)}\left(x_{1},\ldots,\widehat{x_{\ell}},\ldots,x_{k}\right)}{\partial x_{j}}x_{\ell}^{i}.
 \end{equation}

We next show that \begin{equation}\label{x1x2}
\frac{\frac{\partial f}{\partial x_{1}}}{{\frac{\partial f}{\partial x_{2}}}}=u(x_1,x_2),
\end{equation}
where $u$ is some rational function over $\RR$.

In other words, we need to show that 
$\frac{\partial f}{\partial x_{1}}/{\frac{\partial f}{\partial x_{2}}}$ is independent of $x_i$, for every $i\neq 1,2$. 
By symmetry it suffices to show that this ratio is independent of the variable $x_3$.
Consider the first two columns of $J_{T_{f,x_3}}$, corresponding to derivatives with respect to $x_1$ and to $x_2$. Namely,
$$
\left(\begin{array}{cc}
\frac{\partial\alpha_{0}^{(3)}}{\partial x_{1}} & \frac{\partial\alpha_{0}^{(3)}}{\partial x_{2}}\\
\frac{\partial\alpha_{1}^{(3)}}{\partial x_{1}} & \frac{\partial\alpha_{1}^{(3)}}{\partial x_{2}}\\
\vdots& \vdots\\
\frac{\partial\alpha_{d_{x_{3}}}^{(3)}}{\partial x_{1}} & \frac{\partial\alpha_{d_{x_{3}}}^{(3)}}{\partial x_{2}}
\end{array}\right).
$$
Note that neither of the columns is zero, since $f$ depends non-trivially in each of its variables. Moreover, since ${\rm rank}(J_{T_{f,x_3}})=1$, by \eqref{eq:allrank1}, and using the fact that the entries of $J_{T_{f,x_3}}$ are independent of $x_3$, we get that there exists a rational function $u(x_1,x_2,x_4,\ldots,x_k)$ such that
$$
\frac{\partial \alpha_i^{(3)}}{\partial x_1}
=
u(x_1,x_2,x_4,\ldots,x_k)
\frac{\partial \alpha_i^{(3)}}{\partial x_2},\quad\text{for $i=0,\ldots,d_{x_3}$.}
$$
Using the identity \eqref{derivwithcolumns}, this implies that $$\frac{\frac{\partial f}{\partial x_1}}{\frac{\partial f}{\partial x_2}}=u(x_1,x_2,x_4,\ldots,x_k)
;$$
that is, this ratio is independent of $x_3$.
This proves \eqref{x1x2}.

Repeating the analysis symmetrically for the pair $x_1,x_3$ and then to the pair $x_2,x_3$ we conclude that there exist real rational functions $v(x_1,x_3)$ and $w(x_2,x_3)$ such that 
\begin{align*}
\frac{\frac{\partial f}{\partial x_1}}{\frac{\partial f}{\partial x_3}}&=v(x_1,x_3)\\
\frac{\frac{\partial f}{\partial x_2}}{\frac{\partial f}{\partial x_3}}&=w(x_2,x_3).
\end{align*}
But then
$$
\frac{\frac{\partial f}{\partial x_1}}{\frac{\partial f}{\partial x_2}}=\frac{v(x_1,x_3)}{w(x_2,x_3)}=u(x_1,x_2), 
$$
meaning in particular that the ratio of $v,w$ is independent of $x_3$. Thus, setting an arbitrary value for $x_3$, we get that
$$
\frac{\frac{\partial f}{\partial x_1}}{\frac{\partial f}{\partial x_2}}=\frac{v(x_1,0)}{w(x_2,0)}.
$$
Thus, letting $r_1(x_1):=v(x_1,0)$ and $r_2(x_2):=w(x_2,0)$, this proves \eqref{identityforx1x2}, and hence completes the proof of the lemma.
\end{proof}

\section{
Distinct  $d$-volumes on the moment curve in $\RR^d$}\label{sec:app}
In this section we prove Theorem~\ref{thm:app}.

\begin{proof}[Proof of Theorem~\ref{thm:app}]
The {\it moment curve} in $\RR^d$ is defined by $\nu(x)=(x,x^2,\ldots,x^d)$. Consider $d+1$ point on $\nu$ given by the parameters $x_1,\ldots, x_{d+1}\in \RR^d$, and let 
$\sigma(x_1,\ldots,x_{d+1})$ denote the simplex spanned by these points. Define 
$$
f(x_1,\ldots,x_{d+1}):={\rm vol}\sigma(x_1,\ldots,x_{d+1}).
$$
It suffices to prove that
\begin{equation}\label{rankfvolume}{\rm rank}_{x_{d+1}}(f)=d.
\end{equation}
Indeed, assume that \eqref{rankfvolume} is true and let $P\subset \nu$ be a set of size $n$. Let $A$ denote the $x_1$-coordinate of the points in $P$. Then 
$\Delta(P)=f(A,\ldots,A)$. By Theorem~\ref{mainthm}, for every $\eps>0$, we have $$|\Delta(P)|=|f(A,\ldots,A)|=\Omega\left(n^{\frac{5d-4}{2d}-\eps}\right),$$ as needed. 

So we only need to prove \eqref{rankfvolume}. We have
\begin{align*}
        f(x_1,\ldots,x_{d+1})
        &=\frac{1}{d!}\det \begin{pmatrix} 
1&x_1 & x_1^2&\ldots& x_1^d\\ 
1&x_2 & x_2^2&\ldots& x_2^d\\
\vdots&\vdots&\vdots&&\vdots
\\
1&x_{d+1} & x_{d+1}^2&\ldots& x_{d+1}^d\\ 
\end{pmatrix},
\end{align*}
which is the determinant of the $(d+1)\times (d+1)$-{\it Vandermonde matrix}.
Thus \begin{align*}
f(x_1,\ldots,x_{d+1})
&=
\frac{1}{d!}
\prod_{1\le i<j\le d+1}(x_j-x_i)\\
&=
\frac{1}{d!}
\prod_{1\le i<j\le d}(x_j-x_i)\prod_{k=1}^d(x_{d+1}-x_k)\\
&=g(x_1,\ldots,x_d)\hat{f}(x_1,\ldots,x_{d+1}),
    \end{align*}
where
$
g(x_1,\ldots,x_d):=\frac{1}{d!}
\prod_{1\le i<j\le d}(x_j-x_i)
$
is independent of $x_{d+1}$ and
$
\hat f(x_1,\ldots,x_{d+1}):=\prod_{k=1}^d(x_{d+1}-x_k).$
Note that $\hat f$ can be written as
$$
\hat f(x_1,\ldots,x_{d+1})
=
\sum_{\ell=0}^d s_\ell(x_1,\ldots,x_d)x_{d+1}^\ell,
$$
where 
\begin{align*}
s_0(x_1,\ldots,x_d)
&=
(-1)^dx_1\cdots x_d\\
s_1(x_1,\ldots,x_d)
&=
(-1)^{d-1}(x_2\cdots x_d+x_1x_3\cdots x_d+\cdots +x_1\cdots x_{d-1})\\
&\ldots\\
s_{d-1}(x_1,\ldots,x_d)
&=-x_1-\cdots-x_d\\
s_d(x_1,\ldots,x_d)
&=1,
\end{align*}
are the symmetric polynomials. 
\begin{clm}\label{symminvertible}
Let 
$M$ be the $d\times d$ matrix 
$M=(M_{i,j})_{0\le i\le d-1,\;1\le j\le d}$ given by
$$ M_{i,j} = \frac{\partial s_i}{\partial x_j}\qquad(0\le i\le d-1,\;1\le j\le d). $$
Then ${\rm rank}(M)=d$. 
\end{clm}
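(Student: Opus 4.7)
The plan is to reduce the claim to the well-known non-vanishing of the Jacobian of the elementary symmetric polynomials. Comparing the expansion
$$
\prod_{k=1}^d(x_{d+1}-x_k)=\sum_{\ell=0}^d(-1)^{d-\ell}e_{d-\ell}(x_1,\ldots,x_d)\,x_{d+1}^\ell
$$
with the formulas listed for $s_0,\ldots,s_d$ in the paper, I would identify $s_i=(-1)^{d-i}e_{d-i}(x_1,\ldots,x_d)$ for $0\le i\le d$, where $e_k$ denotes the $k$-th elementary symmetric polynomial in $d$ variables. Consequently, the matrix $M$ is, up to a reordering of rows and a sign in each row, the Jacobian
$$
J=\left(\frac{\partial e_i}{\partial x_j}\right)_{1\le i,j\le d}.
$$
Row permutations and sign flips do not change the rank, so it suffices to prove ${\rm rank}(J)=d$.

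For this last step I see two clean routes. The shortest is to invoke the fundamental theorem of symmetric polynomials: $e_1,\ldots,e_d$ are algebraically independent over $\RR$, and the Jacobian criterion in characteristic zero then forces $\det J\not\equiv 0$. A slightly more hands-on alternative is to compute $\det J$ directly using the identity $\partial e_k/\partial x_j=e_{k-1}(x_1,\ldots,\widehat{x_j},\ldots,x_d)$ and recognize, via a short standard calculation, that
$$
\det J=\pm\prod_{1\le i<j\le d}(x_i-x_j),
$$
which is manifestly a nonzero polynomial. Either argument yields ${\rm rank}(M)={\rm rank}(J)=d$.

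I expect no substantive obstacle. The only bookkeeping is matching the indexing conventions between the $s_i$'s (indices $0,\ldots,d-1$) and the $e_k$'s (indices $1,\ldots,d$), and observing that the row corresponding to $s_d=1$ is absent from $M$ but would be trivial anyway, since all of its partial derivatives vanish; it plays no role in the rank computation. After that, the claim is a direct consequence of a classical fact about symmetric functions.
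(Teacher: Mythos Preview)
Your proposal is correct. The identification $s_i=(-1)^{d-i}e_{d-i}$ is accurate, and both routes you suggest are valid. That said, your approach differs from the paper's. Your first route---invoking the algebraic independence of $e_1,\ldots,e_d$ together with the Jacobian criterion in characteristic zero---is a clean conceptual shortcut that leans on a classical external fact. The paper instead supplies a self-contained explicit computation: setting $P(t)=\prod_{k=1}^d(t-x_k)$, it computes $\partial P/\partial x_j$ and evaluates at $t=x_i$ to obtain the matrix identity $V\,M=D$, where $V$ is the $d\times d$ Vandermonde matrix in $x_1,\ldots,x_d$ and $D$ is diagonal with entries $-\prod_{k\ne j}(x_j-x_k)$; this yields $\det M=\det D/\det V=\prod_{i<j}(x_j-x_i)$ directly. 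Your second route gestures at exactly this outcome but stops short of the actual calculation. The trade-off is that your first route is shorter if one is content to cite standard results on symmetric functions, while the paper's argument is elementary and fully self-contained, requiring nothing beyond the definitions.
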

The strategy for this proof is adapted from the proof of~\cite[Section 4]{Que}. 
\begin{proof} 
We prove that
$\det M$ is not the zero polynomial. 
Consider the function
$$ P(x_1,\ldots,x_d,t)=\prod_{k=1}^d (t-x_k)
= \sum_{k=0}^d s_k\, t^k,
$$
which is a monic polynomial in $t$.
For $1\le j\le d$ we have
$$
\frac{\partial P}{\partial x_j}(t)
=-\prod_{\substack{k=1\\ k\ne j}}^d (t-x_k)
=\sum_{k=0}^{d-1} \frac{\partial s_k}{\partial x_j}\,t^k
$$

Evaluating at $t=x_i$ yields
\begin{equation}
\label{identity}
\frac{\partial P}{\partial x_j}(x_i)
=\sum_{k=0}^{d-1}\frac{\partial s_k}{\partial x_j}\,x_i^k
 =
\begin{cases}
0,& i\ne j,\\[4pt]
-\displaystyle\prod_{k\ne j}(x_j-x_k),& i=j.
\end{cases} \end{equation}
The identity \eqref{identity} implies
$$
\begin{pmatrix} 
1&x_1 & x_1^2&\ldots& x_1^{d-1}\\ 
1&x_2 & x_2^2&\ldots& x_2^{d-1}\\
\vdots&\vdots&\vdots&&\vdots
\\
1&x_{d} & x_{d}^2&\ldots& x_{d}^{d-1}\\ 
\end{pmatrix}M=
\operatorname{diag}\left(\frac{\partial P}{\partial x_1}(x_1),\ldots,\frac{\partial P}{\partial x_d}(x_d)\right).
$$
Thus
\begin{align*}
\det(M)
&=
\frac
{
(-1)^d\prod_{j=1}^d\prod_{i\neq j}(x_j-x_i)
}
{
\prod_{1\le i<j\le d}(x_j-x_i)
}\\
&=
\frac
{
(-1)^d\prod_{1\le i\neq j\le d}(x_j-x_i)
}
{
\prod_{1\le i<j\le d}(x_j-x_i)
}\\
&=
\prod_{1\le i<j\le d}(x_j-x_i).
\end{align*}
This completes the proof of Claim~\ref{symminvertible}.
 \end{proof}

We are now ready to prove \eqref{rankfvolume}.
Write
$$
f(x_1,\ldots,x_{d+1})=
\sum_{i=0}^d\alpha_i(x_1,\ldots,x_d)x_{d+1}^i.$$
By the above we have
$$
\alpha_i=g(x_1,\ldots,x_d)s_{i}(x_1,\ldots,x_d),\text{ for $i=0,\ldots,d$}
$$
Let $T=T_{f,x_{d+1}}$. We have
$$
J_{T}= \begin{pmatrix}
g_{x_1}s_0+g{s_0}_{x_1} & g_{x_2}s_0+g{s_0}_{x_2} & 
\cdots
&
g_{x_d}s_0+g{s_0}_{x_d}\\
\vdots&\vdots&&\vdots\\
g_{x_1}s_{d-1}+g{s_{d-1}}_{x_1} & g_{x_2}s_{d-1}+g{s_{d-1}}_{x_2} & 
\cdots
&
g_{x_{d}}s_{d-1}+g{s_{d-1}}_{x_{d-1}}\\
g_{x_1}&g_{x_2}&\cdots&g_{x_d}
\end{pmatrix}.
$$
Applying row operation to the first $d$ rows, namely, $R_i\leftarrow R_i-s_iR_d$ we get
\begin{align*}
{\rm rank}J_{T}
&= {\rm rank}\begin{pmatrix}
g{s_0}_{x_1} & g{s_0}_{x_2} & 
\cdots
&
g{s_0}_{x_d}\\
\vdots&\vdots&&\vdots\\
g{s_{d-1}}_{x_1} & g{s_{d-1}}_{x_2} & 
\cdots
&
g{s_{d-1}}_{x_{d-1}}\\
g_{x_1}&g_{x_2}&\cdots&g_{x_d}
\end{pmatrix}\\
&={\rm rank} \begin{pmatrix}
{s_0}_{x_1} & {s_0}_{x_2} & 
\cdots
&
{s_0}_{x_d}\\
\vdots&\vdots&&\vdots\\
{s_{d-1}}_{x_1} & {s_{d-1}}_{x_2} & 
\cdots
&
{s_{d-1}}_{x_{d-1}}\\
g_{x_1}/g&g_{x_2}/g&\cdots&g_{x_d}/g
\end{pmatrix}.
\end{align*}
Note that first $d$ rows of the above matrix are exactly the matrix $M$ from Claim~\ref{symminvertible}. This completes the proof of Theorem~\ref{thm:app}
\end{proof}

\end{document}